\documentclass[11pt,twoside,a4paper]{article}
\usepackage{amsmath,amssymb,amsthm}

\usepackage{graphicx,psfrag}

\newtheorem{Thm}{Theorem}[section]
\newtheorem{Lem}[Thm]{Lemma}

\theoremstyle{definition}

\theoremstyle{remark}

\newcommand{\R}{\mathbb{R}}

\newcommand{\ga}{\gamma}

\newcommand{\la}{\lambda}
\newcommand{\La}{\Lambda}
\renewcommand{\phi}{\varphi}

\newcommand{\Ric}{\operatorname{Ric}}
\newcommand{\tr}{\operatorname{tr}}
\newcommand{\Id}{\operatorname{Id}}

\newcommand{\s}{\operatorname {sin}}
\renewcommand{\cos}{\operatorname{cos}}

\begin{document}

\title {On 3-dimensional Asymptotically Harmonic Manifolds}     
\author{Viktor Schroeder\footnote{Supported by Swiss National
Science Foundation} \ \& Hemangi Shah\footnote{The author thanks 
Forschungsinstitut f{\"u}r Mathematik, ETH Z{\"u}rich for its hospitality
and support}}
\maketitle

\begin{abstract}
\noindent
Let $(M,g)$ be a complete, simply connected Riemannian manifold of
dimension $3$ without conjugate points. We 
show that $M$ is a
hyperbolic manifold of constant sectional curvature $\frac{-h^2}{4}$, provided
$M$ is asymptotically harmonic of constant $h > 0$. 
\end{abstract}
\section{Introduction}
\indent 
Let $(M,g)$ be a complete, simply connected Riemannian manifold 
without conjugate points.  Let $SM$ be the unit
tangent bundle of $M$. For $v \in SM$, let $\gamma_{v}$ be the geodesic 
with ${\gamma'_{v}}(0) = v$ and 
$ b_{v,t} (x) =  \lim_{t\to \infty} (d (x,\gamma_v(t)) - t)$
the corresponding {\it Busemann function} for $\gamma_{v}$.
The level sets ${b_v}^{-1}(t)$
are called {\it horospheres}.\\ 
\indent
A complete, simply connected Riemannian manifold without conjugate 
points is called {\it asymptotically harmonic} if the mean curvature
of its horospheres is a universal constant,
that is if its Busemann functions satisfy 
$\Delta b_v \equiv h,\; \forall v \in SM$, where $h$
is a nonnegative constant. Then $b_v$ is a smooth function on $M$ for all
 $v$ and all horospheres of $M$ are smooth, simply connected 
hypersurfaces in $M$ with constant mean curvature $h$.\\ 
For example, every simply connected, complete harmonic manifold without 
conjugate points is asymptotically harmonic.\\
\indent
For more details on this subject we refer to the 
discussion and to the references in \cite{H}.
In \cite{H} the following result was proved:\\
Let $M$ be a Hadamard manifold of dimension $3$ whose sectional curvatures 
are bounded from above by a negative constant (i.e. $K \le - a^2$ for some
$a \ne 0$) and
whose curvature tensor 
satisfies $\| \nabla R\| \le C$ for a suitable constant $C$. 
If $M$ is asymptotically harmonic, then $M$ is symmetric and hence of constant
sectional curvature.\\
\indent
We prove this result without any hypothesis 
on the curvature tensor.\\
\begin{Thm}\label{thm:mainthm}
 Let $(M,g)$ be a complete, simply connected Riemannian manifold of
dimension $3$ without conjugate points. If $M$ is asymptotically harmonic of
constant $h > 0$, then $M$ is a manifold of constant sectional 
curvature $\frac{-h^2}{4}$. 
\end{Thm}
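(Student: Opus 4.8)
The plan is to control the horospheres through the Riccati equation for their second fundamental forms, to show that in dimension $3$ asymptotic harmonicity forces every horosphere to be totally umbilic, and to read off constant curvature from the Riccati equation.

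\emph{Step 1 (reduction to umbilicity).} For $v\in SM$ the Busemann function $b_v$ is smooth and $|\nabla b_v|\equiv 1$; along $\gamma_v$ the restriction $U(t)$ of $\nabla^2 b_v$ to the $2$-plane $\dot\gamma_v(t)^\perp$ is a symmetric endomorphism obeying a Riccati equation $U'=U^2+\cR$, where $\cR(\cdot)=R(\cdot,\dot\gamma_v)\dot\gamma_v$ and $\tr\cR=\Ric(\dot\gamma_v,\dot\gamma_v)$. Since $\tr U=\Delta b_v=h$ is constant, taking the trace — equivalently, applying Bochner's formula to $b_v$, whose $\nabla(\Delta b_v)$-term vanishes — gives
\[
\Ric(v,v)=-|\nabla^2 b_v|^2=-\tr(U^2)\qquad(v\in SM).
\]
As $\dim\dot\gamma_v^\perp=2$, Cauchy--Schwarz yields $\tr(U^2)\ge\tfrac12(\tr U)^2=\tfrac{h^2}{2}$, so $\Ric(v,v)\le-\tfrac{h^2}{2}$ for every unit $v$, with equality exactly when $U=\tfrac h2\Id$. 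Hence it suffices to show that every horosphere is totally umbilic: writing $U=\tfrac h2\Id+V$ with $V$ symmetric and trace-free (so $V^2=\mu^2\Id$), the aim is $V\equiv 0$, whereupon the Riccati equation forces $\cR\equiv-\tfrac{h^2}{4}\Id$ in every direction.

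\emph{Step 2 (using no conjugate points and $\dim M=3$).} Because $M$ is simply connected, complete and without conjugate points, $\exp_p$ is a diffeomorphism and geodesics are minimizing; from $\dist(\gamma_v(T),\gamma_v(-T))=2T$ one gets $b_v+b_{-v}\ge 0$ with equality on $\gamma_v$, so the stable and unstable shape operators along $\gamma_v$ are ordered, $\nabla^2 b_v+\nabla^2 b_{-v}\ge 0$. Restricting the Riccati equations of $b_v$ and $b_{-v}$ to $\gamma_v$ and taking trace-free parts produces a coupled system
\[
V'=hV+\cR_0,\qquad \widetilde V'=-h\widetilde V-\cR_0
\]
for the trace-free Hessians $V,\widetilde V$ of $b_v,b_{-v}$, where $\cR_0$ is the trace-free part of $\cR$, and comparing the two formulas for $\Ric(\dot\gamma_v,\dot\gamma_v)$ forces $\|V\|\equiv\|\widetilde V\|$ along $\gamma_v$. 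Here dimension three is essential: the full curvature tensor is algebraically determined by $\Ric$, which in turn is determined pointwise by $w\mapsto\Ric(w,w)=-\tfrac{h^2}{2}-2\mu(w)^2$; thus $\cR_0$ along any geodesic is governed by the function $\mu$, which couples the systems along all geodesics.

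\emph{Step 3 (killing $V$ — the crux).} The data along a single geodesic turn out to be self-consistent, so more is needed. The no-conjugate-points hypothesis should be used in full strength — not only as the ordering $\nabla^2 b_v+\nabla^2 b_{-v}\ge 0$ but via the fact that the stable Riccati solution is the \emph{unique} solution of the system that stays bounded toward the relevant end of $\gamma_v$ — together with the overdeterminacy of $\{\Delta b_w\equiv h:\; w\in SM\}$ and the strict negativity of $\Ric$. The route I would take is a Weitzenböck/maximum-principle argument on $M$ for the nonnegative function $2\mu^2=|\nabla^2 b_v|^2-\tfrac{h^2}{2}$, or for the scalar curvature (which one wants to force to the extremal constant value $-\tfrac{3h^2}{2}$): differentiating $|\nabla^2 b_v|^2=-\Ric(\nabla b_v,\nabla b_v)$ and combining the Weitzenböck formula for $\nabla^2 b_v$ (whose $\nabla^2(\Delta b_v)$-term drops since $\Delta b_v\equiv h$) with the dimension-three identities for $\nabla\Ric$ and $\Delta\Ric$, one should reach a differential inequality which — given $\Ric\le-\tfrac{h^2}{2}g$ and the boundedness coming from the absence of conjugate points — forces $\mu\equiv 0$. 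Then Step 1 gives constant sectional curvature $-h^2/4$. I expect the real obstacles to be controlling the third-order term $|\nabla^3 b_v|$ that enters the Weitzenböck formula and handling the noncompactness of $M$ — presumably by running the argument along geodesics and horospheres and passing to the ideal boundary, using the no-conjugate-points dichotomy to exclude a nontrivial bounded solution of the Riccati system.
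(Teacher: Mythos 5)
Your Step 1 is sound and coincides with the paper's Lemma 2.4: the traced Riccati equation gives $\Ric(v,v)=-\tr(U^2)\le-\tfrac{h^2}{2}$, with equality exactly at umbilic directions, and the theorem does reduce to showing $U\equiv\tfrac h2\Id$ everywhere. The problem is Step 3, which you yourself identify as the crux: it is a programme, not an argument. A Weitzenb\"ock/maximum-principle scheme for $2\mu^2=|\nabla^2 b_v|^2-\tfrac{h^2}{2}$ on a noncompact manifold requires exactly the control you admit you do not have --- bounds on the third-order term $|\nabla^3 b_v|$ and some way to handle noncompactness. Under the stated hypotheses there are no a priori bounds on $R$ or $\nabla R$; indeed the earlier result of Heber--Knieper--Shah assumed $K\le-a^2$ and $\|\nabla R\|\le C$ precisely to make such analytic arguments run, and the whole point of the present theorem is to dispense with those assumptions. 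So as written the proposal does not prove the statement.

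The paper closes the gap by an elementary chain that your sketch does not contain. (i) A topological step: at each $p$, if the trace-free part of $U$ (equivalently of $V=u^+-u^-$) never vanished on $S_pM\cong S^2$, its eigendirections would define a line field tangent to $S^2$, which is impossible; hence every point admits an umbilic direction $v$, i.e.\ $V(v)=h\Id$. (ii) A propagation step: $\det V$ is constant along geodesics, since $\tfrac{d}{dt}\log\det V=\tr\bigl(V'V^{-1}\bigr)=2\tr X=0$ with $X=-\tfrac12(u^++u^-)$ trace-free by asymptotic harmonicity; together with $\tr V\equiv 2h$ this forces $V\equiv h\Id$ along the whole geodesic, hence $2hX=V'=0$, so $X\equiv0$ (here $h>0$ is essential), $u^+\equiv\tfrac h2\Id$, and $R(\cdot,\ga')\ga'\equiv-\tfrac{h^2}{4}\Id$ along that geodesic. (iii) A pointwise step: since $\Ric$ attains its maximum $-\tfrac{h^2}{2}$ at this special $v=e_1$, first variation of $t\mapsto\Ric(v_t,v_t)$ kills the mixed components $\langle R(e_1,e_3)e_3,e_2\rangle$ and $\langle R(e_1,e_2)e_2,e_3\rangle$, so the curvature operator is diagonal with all eigenvalues $\le-\tfrac{h^2}{4}$, i.e.\ $K\le-\tfrac{h^2}{4}$ everywhere. (iv) Standard Riccati comparison then gives $\la_1,\la_2\ge\tfrac h2$, and $\la_1+\la_2=h$ forces umbilicity in every direction, completing the proof. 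Your Step 2 observations (the ordering $u^++u^-\ge0$, the coupled trace-free system, the determination of $R$ by $\Ric$ in dimension $3$) are all true but are not what makes the argument close; without (i)--(iv), or a genuine substitute for them, the proposal remains incomplete.
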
 

\section{Proof of the Theorem}

The first part of the proof (Lemma \ref{lem:detconstant} to Lemma \ref{lem:v})
is a modification of the results in \cite{H}.
Therefore we recall some notations which were already used in that paper.
Our general assumption is that $M$ is 3-dimensional, has no conjugate points and
is asyptotically harmonic with constant $h > 0$.
For $v \in SM$ and $x \in v^{\perp},$ let
$$ u^{+}(v)(x) = \nabla_x \nabla b_{-v} \ \ \ \mbox{and}\ \ \ 
u^{-}(v)(x) = - \nabla_x \nabla b_{v}.$$ 
Thus $u^{\pm}(v) \in \mbox{End} \;(v^{\perp}).$
With $\la_1(v),\la_2(v)$ we denote the eigenvalues of $u^+(v)$.
The endomorphism $u^{\pm}$ satisfy the Riccati equation along the orbits 
of the geodesic flow ${\phi}^{t} : SM \rightarrow SM$.\\
Thus if
$u^{\pm}(t):=u^{\pm}(\phi^tv)$ and 
$R(t):=R(\cdot,\gamma_v'(t))\gamma_v'(t)\in\mbox{End}(\gamma_v'(t)^{\perp}),$
then
 $$(u^{\pm})' + (u^{\pm})^2 + R = 0.$$
We define
$V(v)=u^+(v)-u^-(v)$ and correspondingly
$V(t) = V(\phi^t(v))$ along $\gamma_v(t)$.
We also define $X(v)=\frac{-1}{2}(u^+(v)-u^-(v))$ and
$X(t)=X(\phi^t(v))$. Then the Riccati equation for $u^{\pm}(t)$ yields 
\begin{equation} \label{eq:XV}
XV + VX = (u^-)^2 - (u^+)^2 = (u^+)' -  (u^-)' = V'. 
\end{equation}

\begin{Lem}\label{lem:detconstant}
For fixed $v\in SM$ the map
$t\mapsto\det V(\phi^tv)$ is constant.
\end{Lem}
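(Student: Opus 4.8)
The plan is to show $\frac{d}{dt}\det V(\phi^{t}v)=0$ directly, by combining the relation (\ref{eq:XV}) with two elementary consequences of asymptotic harmonicity. The Hessian of a Busemann function vanishes in the direction of its gradient, so its trace over $w^{\perp}$ equals the full Laplacian; hence $\tr u^{+}(w)=\Delta b_{-w}=h$ and $\tr u^{-}(w)=-\Delta b_{w}=-h$ for every $w\in SM$, which gives $\tr V\equiv 2h$ and $\tr X\equiv 0$. Fix $v$ and write $V(t)=V(\phi^{t}v)$, $X(t)=X(\phi^{t}v)$ for the corresponding endomorphisms of the $2$-dimensional space $\gamma_{v}'(t)^{\perp}$; passing to a parallel orthonormal frame along $\gamma_{v}$ turns (\ref{eq:XV}) into the matrix ODE $V'=XV+VX$ and makes $\tr$ and $\det$ the ordinary trace and determinant. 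These two trace identities are the only input beyond (\ref{eq:XV}).

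For a $2\times2$ matrix, $\det V=\tfrac12\big((\tr V)^{2}-\tr(V^{2})\big)$, so
\[
\frac{d}{dt}\det V=(\tr V)(\tr V')-\tr(VV').
\]
Since $\tr V\equiv 2h$ is constant, $\tr V'\equiv 0$: this annihilates the first term, and, because $\tr V'=2\tr(XV)$ by (\ref{eq:XV}), it also forces $\tr(VX)=0$. For the second term, (\ref{eq:XV}) gives $VV'=VXV+V^{2}X$, hence $\tr(VV')=2\tr(V^{2}X)$; and the Cayley--Hamilton identity $V^{2}=(\tr V)\,V-(\det V)\,\Id$ for the $2\times2$ matrix $V$ then yields $\tr(V^{2}X)=(\tr V)\tr(VX)-(\det V)\tr X=0$, using $\tr(VX)=0$ and $\tr X=0$. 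Therefore $\frac{d}{dt}\det V\equiv 0$, as claimed.

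I do not anticipate a serious obstacle: all the substance lies in isolating the two trace identities, after which the argument is a few lines of $2\times2$ matrix algebra built on the already-given (\ref{eq:XV}). The only points needing care are the frame bookkeeping — every endomorphism above acts on the moving subbundle $\gamma_{v}'(t)^{\perp}$, so one must pass to a parallel frame before differentiating — and keeping the signs in the definitions of $u^{\pm}$, $V$ and $X$ straight. As a variant avoiding Cayley--Hamilton: let $\Phi$ solve $\Phi'=X\Phi$ with $\Phi(0)=\Id$; since $X$ is symmetric, $\Phi(t)V(0)\Phi(t)^{*}$ solves the same linear equation $W'=XW+WX$ with value $V(0)$ at $t=0$, so $V(t)=\Phi(t)V(0)\Phi(t)^{*}$ by uniqueness, and Liouville's formula together with $\tr X\equiv 0$ gives $\det\Phi\equiv 1$, whence $\det V(t)=(\det\Phi(t))^{2}\det V(0)=\det V(0)$; this variant actually uses nothing about the dimension beyond $\tr X\equiv 0$.
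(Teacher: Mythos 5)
Your proof is correct, and while it rests on the same two ingredients as the paper's --- the ODE $V'=XV+VX$ from (\ref{eq:XV}) and the trace identity $\tr X\equiv 0$ coming from asymptotic harmonicity --- the execution differs in a way that buys something. The paper differentiates $\log\det V$, getting $\frac{d}{dt}\log\det V=\tr(V'V^{-1})=2\tr X=0$; this presupposes $\det V(t)\neq 0$, so the degenerate case has to be disposed of afterwards by a continuity argument (``therefore $\det V(t)$ is constant in any case''). Your main computation, via $\det V=\tfrac12\bigl((\tr V)^2-\tr(V^2)\bigr)$ and Cayley--Hamilton, needs no invertibility and hence no case distinction, at the cost of being specific to $2\times 2$ matrices and of invoking the extra identity $\tr V\equiv 2h$ (which the paper also uses, in Lemma \ref{lem:scal}). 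Your Liouville-formula variant $V(t)=\Phi(t)V(0)\Phi(t)^{*}$ is the cleanest of the three: it avoids both the invertibility issue and the dimension restriction, using only $\tr X\equiv 0$ and the symmetry of $X$, and it exhibits the underlying mechanism (conjugation by a unimodular flow) rather than just verifying a derivative. One bookkeeping remark: your conclusion $\tr X\equiv 0$ is the right one, but it requires reading $X=-\tfrac12(u^++u^-)$; the formula $X=-\tfrac12(u^+-u^-)$ printed in the paper is evidently a typo, since only the former is consistent with (\ref{eq:XV}) and with the vanishing of $\tr X$ that the paper's own proof invokes.
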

\begin{proof}
Assume that $V(t)$ is invertible, then
$$ \frac{d}{dt} \log \det V(t) = {\mbox tr}\; V'(t) V^{-1}(t) =
   \mbox{tr}\;(XV + VX) V^{-1}(t) = 2\; \mbox{tr} X = 0.$$  
The last step follows as $M$ is asymptotically harmonic.
Thus as long as $\det V(t) \ne 0$, it is constant along $\gamma_v$.
Therefore $\det V(t)$ is constant along $\gamma_v$ in any case.
\end{proof}

\begin{Lem}\label{lem:scal}
Let $v \in SM$ be such that $V(v) = \mu\Id$, for
some $\mu\in \R$, then 
$R(t) = \frac{-h^{2}}{4}\Id$, $\forall t$.   
\end{Lem}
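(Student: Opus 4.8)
The plan is to determine the endomorphism $V(t)$ completely from two scalar invariants --- its trace and its determinant --- deduce from them that $V(t)=h\,\Id$ for all $t$, and then read off $R(t)$ directly from the Riccati equation. First I would record that asymptotic harmonicity pins down $\tr V(t)$: the maps $u^{+}(t)$ and $-u^{-}(t)$ are the Hessians of Busemann functions restricted to $\gamma_v'(t)^{\perp}$, and since the gradient of a Busemann function is a unit vector field whose flow lines are geodesics, it lies in the kernel of its own Hessian, so $\tr u^{+}(t)=h$ and $\tr u^{-}(t)=-h$; hence $\tr V(t)=\tr u^{+}(t)-\tr u^{-}(t)=2h$ for every $t$. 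Evaluating the hypothesis $V(v)=\mu\,\Id$ on the $2$-dimensional space $v^{\perp}$ then forces $2\mu=\tr V(v)=2h$, so in fact $\mu=h$ and $\det V(v)=h^{2}$.

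Next, Lemma \ref{lem:detconstant} gives $\det V(t)=\det V(v)=h^{2}$ for every $t$. Here the dimension hypothesis does the real work: for each $t$, $V(t)$ is a self-adjoint endomorphism of the $2$-dimensional space $\gamma_v'(t)^{\perp}$, so its characteristic polynomial is $\lambda^{2}-(\tr V(t))\lambda+\det V(t)=\lambda^{2}-2h\lambda+h^{2}=(\lambda-h)^{2}$. Thus both eigenvalues of $V(t)$ equal $h$, and being self-adjoint $V(t)=h\,\Id$. (Equivalently, $V(t)-h\,\Id$ is symmetric and, by Cayley--Hamilton, nilpotent, hence zero.)

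Finally, since $V\equiv h\,\Id$ is constant we have $V'\equiv 0$, so \eqref{eq:XV} gives $2h\,X(t)=XV+VX=V'(t)=0$; as $h>0$ this forces $X(t)=0$, i.e.\ $u^{+}(t)=-u^{-}(t)$ for all $t$. Combining this with $u^{+}(t)-u^{-}(t)=V(t)=h\,\Id$ yields $u^{+}(t)\equiv\frac{h}{2}\,\Id$, a constant, so $(u^{+})'(t)\equiv 0$, and the Riccati equation $(u^{+})'+(u^{+})^{2}+R=0$ then gives $R(t)=-(u^{+}(t))^{2}=-\frac{h^{2}}{4}\,\Id$ for all $t$, as claimed.

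I do not expect a serious obstacle; the argument is short. The points needing a little care are (i) that $\tr u^{\pm}(t)=\pm h$, which is what forces $\mu=h$ and supplies Lemma \ref{lem:detconstant} with the value $h^{2}$, and (ii) that $V(t)$ is genuinely self-adjoint for every $t$, so that ``the spectrum of $V(t)$ is $\{h,h\}$'' upgrades to the stronger statement $V(t)=h\,\Id$ rather than merely ``$V(t)-h\,\Id$ is nilpotent.'' Everything else is the elementary fact that, in dimension two, the trace and determinant determine the spectrum, together with Lemma \ref{lem:detconstant}.
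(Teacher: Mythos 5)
Your proof is correct and follows essentially the same route as the paper's: trace and determinant pin down $V(t)=h\,\Id$ along the whole geodesic, then $V'=0$ and equation \eqref{eq:XV} force $X=0$, hence $u^+=\frac{h}{2}\Id$ and the Riccati equation gives $R(t)=-\frac{h^2}{4}\Id$. You merely make explicit the two steps the paper leaves implicit (that $\tr V\equiv 2h$ forces $\mu=h$, and that a self-adjoint operator on a $2$-dimensional space with both eigenvalues equal to $h$ is $h\,\Id$), and you correctly read $X$ as $-\frac{1}{2}(u^++u^-)$, which is what the paper intends.
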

\begin{proof} Note that if  $V(v) = \mu\Id$, then  
$V(\ga_{v}'(t)) = h\Id$ for all $t$, as $\tr V \equiv 2h$ and by Lemma \ref{lem:detconstant} 
the determinant of $V$ 
is constant along $\ga_{v}(t)$. Now by equation (\ref{eq:XV}) 
$V' = XV + VX. $ Hence, along $\ga_{v}$, $V'(t)  \equiv 0.$
Thus $2h X = 0$ and since we assume $h>0$ we have $X = 0$ along $\ga_{v}$. Therefore, 
$u^+(t) = - u^-(t)$. But from the definition of $V$, $ u^+(t) \equiv
\frac{h}{2}\Id$ i.e $u^+$ is a scalar operator. By the Riccati equation
$(u^+(t))^2 + R(t) = 0,$ i.e. $R(t) = \frac{-h^2}{4}\Id$. 
\end{proof}

\begin{Lem}\label{lem:v}
For every point $p \in M$ there exists $v \in S_{p}M$
such that $R(x, v)v = \frac{-h^{2}}{4}\;x,\; \forall x \in v^{\perp}.$
In particular, $\Ric(v, v) =  \frac{-h^{2}}{2}.$  
\end{Lem}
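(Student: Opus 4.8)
The plan is to produce, at each point $p$, a direction $v$ for which the $2\times2$ symmetric operator $V(v)=u^+(v)-u^-(v)$ is a scalar multiple of the identity; then Lemma \ref{lem:scal} immediately gives $R(x,v)v=\frac{-h^2}{4}x$ for all $x\in v^\perp$, and tracing yields $\Ric(v,v)=\tr R(\cdot,v)v=2\cdot\frac{-h^2}{4}=\frac{-h^2}{2}$. So the whole issue is: among all $v\in S_pM$, find one at which $V(v)$ has a double eigenvalue. Since $\tr V(v)\equiv 2h$ is constant on $S_pM$, the two eigenvalues are $h\pm\rho(v)$ where $\rho(v)=\sqrt{h^2-\det V(v)}\ge 0$ measures the ``defect'' from being scalar; the goal is to find $v$ with $\rho(v)=0$, equivalently $\det V(v)=h^2$, equivalently $V(v)=h\,\Id$.

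First I would set up the relevant symmetries. The map $v\mapsto V(v)$ on the $2$-sphere $S_pM$ has a built-in antipodal behaviour: $u^+(-v)$ and $u^-(-v)$ are governed by the Busemann functions $b_v$ and $b_{-v}$ with roles swapped, so $V(-v)=-V(v)$ (acting on the same plane $v^\perp=(-v)^\perp$, up to the natural identification). In particular $\det V(-v)=\det V(v)$, so $\rho$ is a function on the projective plane $\mathbb{RP}^2=S_pM/\pm$; and by Lemma \ref{lem:detconstant}, $\det V$ — hence $\rho$ — is constant along geodesics, so $\rho$ descends to a continuous function on the space of geodesics through $p$. The plan is to exhibit a geodesic through $p$ on which $\rho$ vanishes.

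The key step is a max/min argument. Consider the continuous function $f(v)=\det V(v)$ on the compact sphere $S_pM$ and pick $v_0$ where it attains its maximum $f(v_0)=\det V(v_0)$; I claim $\det V(v_0)=h^2$, i.e. $V(v_0)=h\,\Id$. Suppose not, so $V(v_0)$ has distinct eigenvalues $h-\rho_0<h+\rho_0$ with $\rho_0>0$; let $e_1,e_2\in v_0^\perp$ be the corresponding unit eigenvectors. The idea is to vary $v_0$ in the plane spanned by $v_0$ and $e_1$ (say), differentiate $\det V$ along this variation using the Riccati equation $(u^\pm)'+(u^\pm)^2+R=0$ together with the relation $V'=XV+VX$ from \eqref{eq:XV}, and show that the first-order term forces $v_0$ not to be a maximum unless $\rho_0=0$ — the extra curvature information one needs is that $u^+$ and $u^-$ are the stable/unstable solutions of the Riccati equation, so $u^+(v)\ge u^-(v)$ as symmetric operators (stable solution dominates unstable), giving $V(v)\ge 0$ and pinning down the sign of the defect. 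I expect the main obstacle to be exactly this computation: one must show that wherever $V$ fails to be scalar the function $\det V$ is not critical, which requires carefully exploiting how infinitesimal rotations of $v$ inside $S_pM$ move the eigendirections and eigenvalues of $V$, and here the $3$-dimensionality (so that $v^\perp$ is $2$-dimensional and the eigenvalue structure is governed by a single scalar $\rho$) is essential. Once the maximum point $v_0$ is shown to satisfy $V(v_0)=h\,\Id$, Lemma \ref{lem:scal} finishes the proof, and the $\Ric$ statement follows by taking the trace of $R(\cdot,v_0)v_0=\frac{-h^2}{4}\Id$ on the $2$-dimensional space $v_0^\perp$.
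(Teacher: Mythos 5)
Your reduction is sound: if $V(v)$ is scalar then Lemma \ref{lem:scal} gives $R(\cdot,v)v=\frac{-h^2}{4}\Id$ on $v^{\perp}$, and tracing over the $2$-dimensional space $v^{\perp}$ gives $\Ric(v,v)=\frac{-h^2}{2}$. But the heart of the lemma --- the \emph{existence} of a $v\in S_pM$ at which $V(v)$ has a double eigenvalue --- is not proved in your proposal. You only announce a variational scheme (maximize $\det V$ over $S_pM$ and show that non-scalar points are non-critical) and explicitly defer what you yourself call the main obstacle, namely computing the first variation of $\det V$ as $v$ rotates inside the fiber $S_pM$. That computation is not available from the tools at hand: the Riccati equation and the identity $V'=XV+VX$ of (\ref{eq:XV}) govern the derivative of $V$ only \emph{along the geodesic flow} (and Lemma \ref{lem:detconstant} already says $\det V$ is constant in that direction), whereas your argument needs the derivative of $V$ in the fiber directions, i.e.\ the dependence of the horosphere second fundamental forms on the point at infinity. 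Nothing in the paper or in your proposal controls that dependence, so the claim ``wherever $V$ fails to be scalar, $\det V$ is not critical'' is unsupported, and I see no way to establish it by a first-derivative test. There is also a sign error in your symmetry claim: from the definitions $u^{+}(-v)=-u^{-}(v)$ and $u^{-}(-v)=-u^{+}(v)$, hence $V(-v)=V(v)$, not $-V(v)$ (the latter would contradict $\tr V\equiv 2h>0$).

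The step you are missing is topological, not analytic. Since $v^{\perp}=T_v(S_pM)$, the assignment $v\mapsto V(v)$ is a continuous field of symmetric endomorphisms of $TS^2$ over the $2$-sphere $S_pM$. If the two eigenvalues of $V(v)$ were distinct for every $v$, the eigenspace of the larger eigenvalue would define a continuous line subbundle of $TS^2$; no such line field exists because $\chi(S^2)=2\neq 0$ (hairy ball theorem). Hence some $v$ satisfies $V(v)=\mu\Id$, and Lemma \ref{lem:scal} finishes the proof. This is the ``easy topological argument'' the paper invokes, and it genuinely uses the topology of $S^2$ rather than any extremality of $\det V$.
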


\begin{proof}
Since $TS^2$ is nontrivial, an easy topological argument shows,
that for every $p \in M$ there exists $v \in S_pM$ such that
the two eigenvalues of $V(v)$ coincide.
Thus $V(v) = \mu\Id.$
The result now follows from Lemma \ref{lem:scal}

\end{proof}

\begin{Lem} \label{lem:estimatericci}
 For all $v \in SM$ we have $\Ric(v, v) \leq  \frac{-h^{2}}{2}.$ 
 \end{Lem}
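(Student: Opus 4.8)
The plan is to exploit the two identities we already have — the constancy of $\det V$ along geodesics (Lemma~\ref{lem:detconstant}) and the relation $\tr X=0$, i.e.\ $\tr u^+ = \tr u^- = h$ — together with the Riccati equation, to bound $\tr R(t)$ from above by $h^2/2$ for every $v$. Since $\Ric(v,v) = \tr R(t)$ (at $t=0$), this is exactly the claim. The key point is that $\det V$ is a fixed constant $c(v)\ge 0$ along $\gamma_v$, while $\tr V = 2h$ is also constant; so the eigenvalues $\nu_1(t),\nu_2(t)$ of $V(t)$ satisfy $\nu_1+\nu_2 = 2h$ and $\nu_1\nu_2 = c(v)$, forcing them to be two fixed numbers, hence $V(t)$ has constant characteristic polynomial. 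In particular $c(v) \le h^2$ with equality iff $V(v)=h\,\Id$, in which case Lemma~\ref{lem:scal} already gives $R \equiv \frac{-h^2}{4}\Id$ and $\Ric(v,v) = -h^2/2$.

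First I would write everything in terms of $u^+(t)$ and $V(t) = u^+(t)-u^-(t) = 2u^+(t) - (u^+(t)+u^-(t))$. A cleaner route: set $W(t) = u^+(t)+u^-(t)$, so $\tr W = 2h$ by asymptotic harmonicity, and note $u^\pm = \frac12(W\pm V)$. The two Riccati equations $(u^\pm)' + (u^\pm)^2 + R = 0$ subtract to give $V' = XV+VX$ (already in~\eqref{eq:XV}) and add to give $W' + \frac12(W^2+V^2) + 2R = 0$, whence
\begin{equation}\label{eq:trRformula}
2\,\tr R(t) \;=\; -\tr W'(t) \;-\; \tfrac12\tr W(t)^2 \;-\; \tfrac12\tr V(t)^2 \;=\; -\,\tfrac12\tr W(t)^2 \;-\; \tfrac12\tr V(t)^2,
\end{equation}
using $\tr W' = (\tr W)' = 0$. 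Now $\tr W^2 \ge \tfrac12(\tr W)^2 = 2h^2$ in dimension $2$ by Cauchy--Schwarz, and $\tr V^2 \ge 0$; so $2\,\tr R(t) \le -\,\tfrac12\cdot 2h^2 = -h^2$, i.e.\ $\tr R(t) \le -h^2/2$. Evaluating at $t=0$ gives $\Ric(v,v) = \tr R(0) \le -h^2/2$, which is the assertion.

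The main obstacle is purely one of bookkeeping: one must be sure that $u^\pm$, hence $W$ and $V$, are genuinely smooth (not merely Lipschitz) along every geodesic so that the differentiation in~\eqref{eq:trRformula} is legitimate — but this is guaranteed by asymptotic harmonicity, which the excerpt records as making all Busemann functions smooth. One should also double-check that $W = u^+ + u^-$ is symmetric (both $u^+$ and $u^-$ are symmetric as Hessians of Busemann functions), so that $\tr W^2 = \|W\|^2$ and the Cauchy--Schwarz step $\|W\|^2 \ge (\tr W)^2/\dim$ applies; and that $\tr V^2 \ge 0$, which likewise needs $V$ symmetric, again automatic. Equality $\Ric(v,v) = -h^2/2$ forces $\tr V^2 = 0$, hence $V(v)=0$; this is a degenerate sub-case consistent with (though not needed for) the inequality, and it meshes with the later analysis. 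No estimate on the curvature tensor or its derivatives is used anywhere — only the algebraic consequences of $\tr X=0$ and the Riccati equation.
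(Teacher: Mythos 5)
Your overall strategy---add the two Riccati equations, take the trace, kill the derivative term using constancy of the mean curvature, and finish with Cauchy--Schwarz---is sound and is in essence the paper's own argument. But there is a sign error in the bookkeeping that, as written, makes your final chain of inequalities fall short. With the paper's conventions $u^+(v)=\nabla\nabla b_{-v}|_{v^\perp}$ and $u^-(v)=-\nabla\nabla b_v|_{v^\perp}$, asymptotic harmonicity gives $\tr u^+=h$ but $\tr u^-=-h$ (the Hessian of $b_v$ vanishes in the direction $\nabla b_v$, so its trace over $v^\perp$ equals the full Laplacian $h$, and $u^-$ carries a minus sign). Hence $\tr W=\tr(u^++u^-)=0$ while $\tr V=2h$---the opposite of what you assert (``$\tr u^+=\tr u^-=h$'', ``$\tr W=2h$''); note that your own first paragraph correctly records $\nu_1+\nu_2=2h$ for the eigenvalues of $V$, so the two halves of your write-up are inconsistent with each other. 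The consequence is that your Cauchy--Schwarz step $\tr W^2\ge\tfrac12(\tr W)^2=2h^2$ actually only says $\tr W^2\ge 0$, and together with $\tr V^2\ge 0$ your identity $2\tr R=-\tfrac12\tr W^2-\tfrac12\tr V^2$ yields merely $\tr R\le 0$, not $\tr R\le -h^2/2$. (The confusion is understandable: the paper's stated definition of $X$ contains a typo---it must be $X=-\tfrac12(u^++u^-)$ for equation \eqref{eq:XV} and for $\tr X=0$ in the proof of Lemma \ref{lem:detconstant} to hold.)

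The repair is immediate: apply Cauchy--Schwarz to $V$ rather than $W$, i.e.\ $\tr V^2\ge\tfrac12(\tr V)^2=2h^2$, or more symmetrically observe $\tr W^2+\tr V^2=2\tr(u^+)^2+2\tr(u^-)^2\ge(\tr u^+)^2+(\tr u^-)^2=2h^2$. Once this is fixed your argument is correct and coincides in substance with the paper's, which is slightly leaner: it traces only the $u^+$ Riccati equation, obtaining $\tr(u^+)^2+\tr R=0$ from $(\tr u^+)'=0$, and then $\tr(u^+)^2=\la_1^2+\la_2^2\ge\tfrac{h^2}{2}$ from $\la_1+\la_2=h$. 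In particular the constancy of $\det V$ (Lemma \ref{lem:detconstant}) and the smoothness concerns you raise, while harmless, are not needed; only the constancy of $\tr u^+$ along the flow enters.
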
 
\begin{proof} The Riccati equation for $t\mapsto u^{+}(t)$ implies
 $(u^{+})' + (u^{+})^2 + R = 0.$ Hence, $\tr(u^{+})^2 + \tr R = 0.$ 
 Thus, $\Ric(v, v) = -({{\la}_1}^{2}(v) +{{\la}_2}^{2}(v)).$ 
By hypothesis ${\la}_1(v) + {\la}_2(v) = h,$ hence
 ${{\la}_1}^{2}(v) +{{\la}_2}^{2}(v) \geq \frac{h^{2}}{2}.$
 Consequently, $\Ric(v, v) \leq  \frac{-h^{2}}{2}.$  
\end{proof}

\begin{Lem}\label{lem:curvature}
The sectional curvature $K$ of $M$ satisfies
$K \le -\frac{h^2}{4}$.
\end{Lem}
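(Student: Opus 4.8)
The plan is to convert the curvature bound into a pointwise statement about the Ricci operator and then use that in dimension $3$ the whole curvature tensor is recovered from $\Ric$. Recall that for a plane $\sigma\subset T_pM$ with unit normal $n$ and scalar curvature $S(p)$ one has the identity $K(\sigma)=\tfrac12 S(p)-\Ric(n,n)$; hence it suffices to show, at every point $p$, that $\Ric(n,n)\ge\tfrac12 S(p)+\tfrac{h^2}{4}$ for all unit $n$, i.e.\ that the smallest eigenvalue of the Ricci operator at $p$ is $\ge\tfrac12 S(p)+\tfrac{h^2}{4}$. Everything will follow once the Ricci operator at $p$ has been pinned down using Lemmas \ref{lem:v} and \ref{lem:estimatericci}.

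First I would fix $p$ and use Lemma \ref{lem:v} to pick $v_0\in S_pM$ with $R(x,v_0)v_0=-\tfrac{h^2}{4}\,x$ for all $x\in v_0^{\perp}$. From the symmetries of $R$ I would check that $\Ric$ restricted to the plane $v_0^{\perp}$ is a scalar multiple of the metric: taking an orthonormal basis $\{v_0,x,x'\}$, the skew-symmetry and pair symmetry of $R$ give $\Ric(x,x')=\langle R(v_0,x)x',v_0\rangle=\langle R(x',v_0)v_0,x\rangle=0$, while $\Ric(x,x)=K(v_0,x)+K(v_0^{\perp})=-\tfrac{h^2}{4}+K(v_0^{\perp})$ is independent of the unit vector $x\in v_0^{\perp}$; call this value $\lambda$. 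Next I would observe that $v_0$ is a Ricci eigenvector: by Lemma \ref{lem:estimatericci} the function $w\mapsto\Ric(w,w)$ on $S_pM$ is $\le-\tfrac{h^2}{2}$, and by Lemma \ref{lem:v} it equals $-\tfrac{h^2}{2}$ at $v_0$, so $v_0$ is a global maximum of this function, hence a critical point, hence an eigenvector, with eigenvalue $-\tfrac{h^2}{2}$. Combining the two facts, the Ricci operator at $p$ has eigenvalue $-\tfrac{h^2}{2}$ (eigenvector $v_0$) and eigenvalue $\lambda$ (eigenspace $v_0^{\perp}$), and $\lambda\le-\tfrac{h^2}{2}$ again by Lemma \ref{lem:estimatericci}. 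Thus $S(p)=-\tfrac{h^2}{2}+2\lambda$, the least Ricci eigenvalue is $\lambda$, and for any plane $\sigma=n^{\perp}$
\[
K(\sigma)=\tfrac12 S(p)-\Ric(n,n)=-\tfrac{h^2}{4}+\lambda-\Ric(n,n)\le-\tfrac{h^2}{4}.
\]
Since $p$ was arbitrary this gives $K\le-\tfrac{h^2}{4}$.

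The reduction to the Ricci operator via the $3$-dimensional identity $K(\sigma)=\tfrac12 S-\Ric(n,n)$ and the ``maximum $\Rightarrow$ eigenvector'' remark are purely formal; the one substantive point is showing that $\Ric$ is scalar on $v_0^{\perp}$ (equivalently, that the two ``transverse'' Ricci eigenvalues coincide). This is only a short computation with the curvature symmetries once $R(\cdot,v_0)v_0=-\tfrac{h^2}{4}\Id$ is available, but it is exactly what, combined with the a priori inequality $\Ric\le-\tfrac{h^2}{2}$ and the equality case realized by $v_0$, rigidifies the Ricci operator enough to force $K\le-\tfrac{h^2}{4}$. (One could equally well bypass the maximum argument and deduce that $v_0$ is an eigenvector directly: writing the Ricci operator in the basis $\{v_0,x,x'\}$ with off-diagonal entries $\alpha,\beta$ against $v_0$, one of its eigenvalues equals $\tfrac12\big((\lambda-\tfrac{h^2}{2})+\sqrt{(\lambda+\tfrac{h^2}{2})^2+4(\alpha^2+\beta^2)}\big)$, and Lemma \ref{lem:estimatericci} applied to this eigenvalue forces $\alpha=\beta=0$.)
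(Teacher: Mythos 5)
Your proof is correct, and all the individual steps (the symmetry computation showing $\Ric$ is scalar on $v_0^{\perp}$, the eigenvalue/maximum argument, and the final estimate via $K(n^{\perp})=\tfrac12 S-\Ric(n,n)$) check out. It is, however, essentially the Hodge-dual packaging of the paper's argument rather than an independent route: the paper diagonalizes the curvature operator on $\La^2T_pM$ in the basis $\{e_1\wedge e_2,\,e_1\wedge e_3,\,e_2\wedge e_3\}$ and checks that all three eigenvalues are $\le-\tfrac{h^2}{4}$, while you diagonalize the Ricci operator on $T_pM$ in the basis $\{v_0,x,x'\}$ and feed the eigenvalues into the $3$-dimensional identity $K(n^{\perp})=\tfrac12 S-\Ric(n,n)$; under the duality $n\leftrightarrow$ the $2$-vector orthogonal to $n$, these are the same diagonalization. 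In particular your off-diagonal Ricci entries are literally the paper's obstructing curvature components: with $e_1=v_0$, $e_2=x$, $e_3=x'$ one has $\Ric(e_1,e_2)=\left<R(e_1,e_3)e_3,e_2\right>$ and $\Ric(e_1,e_3)=\left<R(e_1,e_2)e_2,e_3\right>$, and both proofs kill them by the same mechanism, namely that $w\mapsto\Ric(w,w)$ attains its maximum $-\tfrac{h^2}{2}$ at $v_0$ (the paper differentiates $f(t)=\Ric(v_t,v_t)$ at $t=0$; you invoke the equivalent Lagrange-multiplier fact that a maximizer of a quadratic form on the sphere is an eigenvector, or the explicit $2\times2$ eigenvalue estimate). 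Your version does buy a small economy at the end: once $\Ric$ is known to be $\operatorname{diag}(-\tfrac{h^2}{2},\la,\la)$ with $\la\le-\tfrac{h^2}{2}$, the inequality $K(n^{\perp})=\tfrac12 S-\Ric(n,n)\le\tfrac12 S-\la=-\tfrac{h^2}{4}$ finishes in one line, whereas the paper still has to argue separately that $e_2\wedge e_3$ is an eigenvector of the curvature operator before reading off the bound.
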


\begin{proof}
Let $p \in M$, and let $v$ be the vector in Lemma \ref{lem:v}.
Take $e_1 = v,$ and let $e_2$ and $e_3$ be unit vectors 
orthogonal to $e_1$ so that $\{ e_1, e_2, e_3 \}$ forms an orthonormal
basis of $T_pM$. Then $\{e_1 \wedge e_2,\; e_1 \wedge e_3,\;  e_2 \wedge e_3\}$ forms an 
orthonormal basis of ${\La}^2T_pM$. 
We want to show that the curvature operator,
considered as map
$R : {\La}^2 T_pM \rightarrow {\La}^2 T_pM$,
$\left<R(X \wedge Y), V \wedge W\right> =\left<R(X,Y)W, V\right>$ 
is diagonal in this basis.\\
From Lemma \ref{lem:v} we see
$R(e_2, e_1)e_1 = \frac{-h^{2}}{4}\; e_2,$\; 
$R(e_3, e_1)e_1 = \frac{-h^{2}}{4}\; e_3.$ 
Thus  
$K(e_1, e_2) = K(e_1, e_3) = \frac{-h^{2}}{4}$\; and 
$K(e_2, e_3) \leq  \frac{-h^{2}}{4}$\; as\; 
$\Ric(e_3,e_3) \leq \frac{-h^{2}}{2},$  
where $K(v,w)$ denotes the sectional curvature of the plane
 spanned by $v$ and $w$. 
 We will prove below that
\begin{eqnarray} \label{eq:12}
 \left<R(e_1, e_3)e_3, e_2 \right> = 0 \ \ \mbox{and} \ 
\left<R(e_1, e_2)e_2, e_3 \right> = 0.
\end{eqnarray}
Assuming this for a moment, it follows that
$R(e_1 \wedge e_3) \perp \mbox{span} \{e_1 \wedge e_2,\; e_2 \wedge e_3\}$
and
$R(e_1 \wedge e_2) \perp \mbox{span} \{e_1 \wedge e_3, e_2 \wedge e_3\}.$    
Hence, 
$$R(e_1 \wedge e_2) = \frac{-h^{2}}{4}\; e_1 \wedge e_2 \;\mbox{and}\;
  R(e_1 \wedge e_3) = \frac{-h^{2}}{4}\; e_1 \wedge e_3.$$   
Since $e_1 \wedge e_2$ and $e_1 \wedge e_3$ are eigenvectors of $R$, also
$e_2 \wedge e_3$ is an eigenvector and we obtain
$$ R(e_2 \wedge e_3) =
 K(e_2,e_3)\; e_2 \wedge e_3.$$ 
Thus the curvature operator is diagonal in the basis
$\{e_1 \wedge e_2,\; e_1 \wedge e_3,\;  e_2 \wedge e_3\}$
and all eigenvalues are $\le \frac{-h^{2}}{4},$ 
which proves the result. 

It remains to show (\ref{eq:12}). 
Consider for $t \in (-\varepsilon,\varepsilon)$ the vectors
$v_t = \cos t e_1   + \sin t e_2.$ 
Then,
\begin{eqnarray*} 
 f(t) := \Ric(v_t,v_t) = K(v_t, e_3) + 
K(v_t, - e_1 \s t + e_2 {\cos} t)\\ 
= K(e_1, e_2) + {\s}^{2}t \; K(e_2,e_3) + {\cos}^{2}t \; K(e_1,e_3)
  +{\s}2t \left<R(e_1, e_3)e_3, e_2 \right>.
\end{eqnarray*}    
By Lemma \ref{lem:estimatericci}
 $f(0) = \Ric(v,v) =\frac{-h^2}{2} $ 
is maximal  and
hence $f'(0) = 0$. This implies the first equation in
(\ref{eq:12}).
If we replace $e_2$ by $e_3$ in the above computation 
we obtain the second equation.

\end{proof}

Finally we come to the 

\begin{proof} (of Theorem \ref{thm:mainthm}) \\
Lemma \ref{lem:curvature} implies that $K_M \leq \frac{-h^{2}}{4}.$
By standard comparison geometry we obtain
$\la_1(v) \geq \frac{h}{2}$ and $\la_2(v) \geq \frac{h}{2}$.
Now $\la_1 + \la_2 = h$ implies that
$\la_1 = \la_2 = \frac{h}{2}.$  
Hence, $u^{+}(v)$ is a scalar operator
and therefore $R(x,v)v = \frac{-h^{2}}{4}\; x,\; \forall v$ and 
$\forall x \in {v}^{\perp}.$ Thus,\; $K_M \equiv \frac{-h^{2}}{4}$.
\end{proof}

\section{Final Remark} 
We expect that the result holds also in the case $h=0$, i.e. 
if all horospheres
are minimal. Our argument, however, uses $h >0$ essentially in the proof
of Lemma \ref{lem:scal}

Institut f{\"u}r Mathematik, Universit{\"a}t Z{\"u}rich, 
Winterthurerstrasse 190,\\
\indent
CH-8057 Z{\"u}rich.\\
\indent
Email :  vschroed@math.unizh.ch\\\\
\indent
School of Mathematics, Tata Institute of Fundamental Research,\\
\indent
Dr. Homi Bhabha Road, Mumbai - 400005, India.\\
\indent
Email :  hema@math.tifr.res.in

\end{document}